\theoremstyle{definition}
\newtheorem{theorem}{Theorem}[section]
\newtheorem{lemma}[theorem]{Lemma}
\newtheorem{proposition}[theorem]{Proposition}
\newtheorem{fact}[theorem]{Fact} 
\newtheorem{observation}[theorem]{Observation} 
\theoremstyle{definition}
\newtheorem{definition}[theorem]{Definition}
\newtheorem{example}[theorem]{Example}
\newtheorem*{ac}{Acknowledgments} 
\theoremstyle{remark}
\newtheorem{remark}[theorem]{Remark}
\newenvironment{rmenum}{
\begin{enumerate}

}
{\end{enumerate}}
\title[Parity Factors I]{Parity Factors I: General Kotzig-Lov\'asz  Decomposition for Grafts}
\author{Nanao Kita}
\address{National Institute of Informatics
2-1-2 Hitotsubashi, Chiyoda-ku, Tokyo, Japan 101-8430}
\email{kita@nii.ac.jp}
\date{\today}
\newcommand{\distgt}[4]{\lambda(#3, #4; #1, #2)}
\newcommand{\distgtf}[5]{\lambda(#4, #5; #3; #1, #2)}
\newcommand{\parcut}[2]{\delta_{#1}(#2)}
\newcommand{\tcomp}[2]{\mathcal{G}(#1, #2)}
\newcommand{\levelr}[2]{U_{#1}(#2)}
\newcommand{\laysigr}[2]{U_{#1}(#2)}
\newcommand{\layler}[2]{U_{ \le #1}(#2)}
\newcommand{\conn}[1]{\mathcal{C}(#1)}
\newcommand{\gsim}[1]{\sim_{#1}}
\newcommand{\gtsim}[2]{\sim_{(#1, #2)}}
\newcommand{\gtpart}[2]{\mathcal{P}(#1, #2)}
\newcommand{\pargtpart}[3]{\mathcal{P}(#3; #1, #2)}
\begin{document}
\maketitle

\begin{abstract} 
This paper is the first  from a series of  papers 
that establish a generalization of the basilica decomposition for cardinality minimum joins in grafts. 
Joins in grafts are also known as $T$-joins in graphs, 
where $T$ is a given set of vertices, 
and minimum joins in grafts can be considered as a generalization 
of perfect matchings in graphs provided in terms of parity. 
The basilica decomposition is a canonical decomposition applicable to 
general graphs with perfect matchings, 
and the  general Kotzig-Lov\'asz decomposition is one of the three central concepts that compose this theory.  
The classical Kotzig-Lov\'asz decomposition is a canonical decomposition 
 for a special class of graphs known as {\em factor-connected graphs}   and is famous for its contribution to the study of the matching polytope and lattice.  
The general Kotzig-Lov\'asz decomposition is a nontrivial generalization 
of its classical counterpart and is applicable to general graphs with perfect matchings.  
As a component of the basilica decomposition theory, 
the general Kotzig-Lov\'asz decomposition has  contributed 
to the derivation of further results in matching theory, 
such as a characterization of barriers or an alternative proof of the tight cut lemma. 
In this paper, we present an analogue of the general Kotzig-Lov\'asz decomposition 
for minimum joins in grafts. 
\end{abstract}

\section{Introduction} 
This paper is the first  in a series of papers~\cite{kitatcath, kitatcathalg}
that establish the basilica decomposition~\cite{kitacathedral, DBLP:journals/corr/abs-1205-3816, DBLP:conf/isaac/Kita12} for grafts. 
In this paper, 
we present the general Kotzig-Lov\'asz decomposition~\cite{kotzig1959a, kotzig1959b, kotzig1960, lovasz1972b, kitacathedral, DBLP:journals/corr/abs-1205-3816, DBLP:conf/isaac/Kita12} for grafts.

The general Kotzig-Lov\'asz decomposition is a canonical decomposition 
of graphs that describes the structure of perfect matchings. 
In the theory of matchings ($1$-matchings), 
{\em canonical decompositions}, such as 
the {\em Gallai-Edmonds}~\cite{edmonds1965, gallai1964} or {\em Dulmage-Mendelsohn}~\cite{dm1958, dm1959, dm1963} decompositions, 
are fundamental tools that  constitute the basis of the theory~\cite{lp1986}. 
In matching theory, something is said to be {\em canonical} if it is determined uniquely for a given graph.  
A canonical decomposition is a decomposition determined uniquely for a graph 
and thus provides us with a comprehensive view of the structure of all maximum matchings. 
Canonical decompositions are therefore strong tools for analyzing matchings.

The classical Kotizg-Lov\'asz decomposition~\cite{kotzig1959a, kotzig1959b, kotzig1960, lovasz1972b} 
  was proposed for a particular class of graphs with perfect matchings 
known as {\em factor-connected graphs}. 
This class of graphs is essential to  polyhedral studies of perfect matchings~\cite{lp1986, schrijver2003}. 
The classical Kotzig-Lov\'asz decomposition was used for deriving 
 important combinatorial results for factor-connected graphs, 
such as the {\em two ear theorem} and {\em tight cut lemma}~\cite{lp1986}, 
both of which, along with the classical Kotzig-Lov\'asz decomposition itself, 
 have been  used for deriving central results regarding the perfect matching polytope and lattice~\cite{schrijver2003, lp1986}.

Recently, 
the classical Kotzig-Lov\'asz decomposition was generalized 
for general graphs with perfect matchings~\cite{kitacathedral, DBLP:journals/corr/abs-1205-3816, DBLP:conf/isaac/Kita12}.\footnote{This is not a trivial generalization;  
the general Kotzig-Lov\'asz decomposition is not the mere disjoint union of the classical Kotzig-Lov\'asz decomposition of each factor-component  but is a refinement of it.} 
We call this the {\em general Kotzig-Lov\'asz decomposition} 
or just the {\em Kotzig-Lov\'asz decomposition}.   
The general Kotzig-Lov\'asz decomposition is proposed as 
one of the three main components of the theory of {\em basilica decomposition}~\cite{kitacathedral, DBLP:journals/corr/abs-1205-3816, DBLP:conf/isaac/Kita12}. 
The basilica decomposition is a canonical decomposition 
applicable to general graphs with perfect matchings 
and consists of the following three main concepts: 
\begin{rmenum} 
\item \label{item:kl} 
the general Kotzig-Lov\'asz decomposition,  
\item \label{item:order} 
the {\em cathedral order}, 
a canonical partial order between {\em factor-components}, and 
\item \label{item:rel} 
the canonically determined relationship between \ref{item:kl} and \ref{item:order}. 
\end{rmenum} 
The basilica decomposition 
has been used for deriving alternative proofs~\cite{kita2014alternative, kita2015graph}
of classical theorems such as {\em Lov\'asz's cathedral theorem for saturated graphs}~\cite{lovasz1972b} 
and Edmonds et al.'s {\em tight cut lemma}~\cite{elp1982}. 
It is also used for providing 
a characterization of {\em barriers}~\cite{lp1986, kita2012canonical, kita2013disclosing} 
and a generalization of the {\em Dulmage-Mendelsohn decomposition}~\cite{dm1958, dm1959, dm1963}, 
 for arbitrary graphs~\cite{kitagdm}, which was originally for bipartite graphs.

Perfect matchings can be generalized into {\em $T$-joins}~\cite{lp1986, schrijver2003}.  
Given a graph and a set of vertices $T$, 
a $T$-join is a set of edges $F$ 
such that each vertex is adjacent to an odd number of edges from $F$ 
if and only if it is a vertex from $T$. 
A graph has a $T$-join if and only if each connected component has an even number of vertices from $T$. 
The concept of $T$-joins in graphs 
are also known as {\em joins} in {\em grafts}; 
a graft is defined as a tuple of graph and set $T$ that can possess a $T$-join. 
In this paper, we often use this terminology of joins and grafts. 
Even in a graph with $T$-joins, 
the minimum $T$-join problem, 
namely, 
which set of edges can be a $T$-join with the minimum number of edges,  is not trivial. 
If a given graph has a perfect matching and $T$ is equal to its vertex set, 
then minimum $T$-joins of this graph coincide with perfect matchings. 
That is, minimum $T$-joins are a generalization of perfect matchings 
that is provided in terms of  parity. 
The minimum $T$-join problem  includes  well-known classical  problems 
 such as the {\em Chinese postman problem}~\cite{lp1986, schrijver2003}. 
Additionally, 
$T$-joins are known to be closely related to famous open problems in graph theory
 such as Tutte's $4$-flow conjecture~\cite{schrijver2003}.

In this paper, we present a generalization of the general Kotzig-Lov\'asz decomposition for $T$-joins. 
This result is, in fact, a component of the entire theory of  the basilica decomposition  
for $T$-joins; 
 in this paper and its sequels~\cite{kitatcath, kitatcathalg}, 
 the three central concepts of the basilica decomposition theory are generalized for $T$-joins. 
Our result in this paper includes an old announcement by Seb\"o~\cite{DBLP:journals/jct/Sebo90}, that is, 
a generalization of the classical Kotzig-Lov\'asz decomposition for $T$-joins.%
\footnote{As with the relationship between the classical and general Kotzig-Lov\'asz decompositions 
for perfect matchings, 
the general Kotzig-Lov\'asz decomposition for $T$-joins 
is not a trivial extension  but a refinement of the $T$-join analogue of the classical one. } 
Considering the analogical relationship between perfect matchings and $T$-joins, 
we believe that various consequences will be obtained from our result, 
such as $T$-join analogues of those results derived from the classical or general Kotzig-Lov\'asz decompositions or the basilica decomposition theory.

The reminder of this paper is organized as follows: 
In Section~\ref{sec:definition},  we introduce the basic notation and definitions 
used in this paper. 
In Section~\ref{sec:1fackl}, 
we explain the exact statement of the original general Kotzig-Lov\'asz decomposition for graphs with perfect matchings. In Section~\ref{sec:dist}, 
we introduce preliminary concepts and facts regarding distances in grafts. 
In Section~\ref{sec:elemdist}, 
we provide observation for {\em factor-connected grafts} regarding distances 
to be used in Section~\ref{sec:tjoinkl}. 
In Section~\ref{sec:tjoinkl}, 
we prove the main result of this paper, the analogue of the general Kotzig-Lov\'asz decomposition for minimum joins in grafts.

\section{Definitions} \label{sec:definition}
\subsection{Notation} 
For basic notation, we mostly follow Schrijver~\cite{schrijver2003}. 
We list in the following exceptional or non-standard definitions. 
The symmetric difference of two sets $A$ and $B$, 
that is, $(A\setminus B) \cup (B\setminus A)$ is denoted by $A\triangle B$. 
As usual, a singleton $\{x\}$ is often denoted simply by $x$. 
We treat paths and circuits as graphs. 
That is, a circuit is a connected graph in which every vertex is of degree two. 
A path is a connected graph in which every vertex is of degree no more than two, 
and at least one vertex is of degree less than two. 
A graph with a single vertex and no edges is regarded as a path. 
Let $G$ be a graph.  
The vertex set and the edge set of $G$ are denoted by $V(G)$ and $E(G)$, 
respectively.  
The set of connected components of $G$ is denoted by $\mathcal{C}(G)$. 
Let $X\subseteq V(G)$. 
  The subgraph of $G$ induced by $X$ is denoted by $G[X]$.  
The graph $G[V(G)\setminus X]$ is denoted by $G - X$. 
The contraction of $G$ by $X$ is denoted by $G/X$. 
The set of edges joining $X$ and $V(G)\setminus X$ is denoted by $\parcut{G}{X}$. 
The set of edges that span $X$, that is, 
those with both ends in $X$, are denoted by $E_G[X]$. 
Given $F\subseteq E(G)$, $G-F$ denotes the subgraph of $G$ 
obtained by deleting $F$ without removing any vertices.  
Given a set of disjoint subgraphs $\mathcal{K}$ of $G$, 
$G/\mathcal{K}$ denotes the graph 
obtained by contracting each $K\in\mathcal{K}$ into a  vertex. 
That is, if we let $\mathcal{K} = \{ K_1, \ldots, K_l\}$, where $l\ge 1$, 
then $G/\mathcal{K} = G/V(K_1)/\cdots/V(K_l)$. 
For simplicity, 
we identify the vertices or edges of  $G/\mathcal{K}$ 
with the corresponding items of $G$. 
For example, 
if $e\in E(G)$ is an edge joining $K_1$ and $K_2$, 
we also denote by $e$  the corresponding edge of $G/\mathcal{K}$ 
joining $[K_1]$ and $[K_2]$, where $[K_1]$ and $[K_2]$ are the contracted vertices 
 corresponding to $K_1$ and $K_2$, respectively.

\subsection{Matchings} 
Let $G$ be a graph. 
A  set $M \subseteq E(G)$ is a {\em matching} 
if  each $v\in V(G)$ satisfies $|\parcut{G}{v} \cap M | \le 1$.   
A matching is {\em maximum} if it has the maximum number of edges. 
A matching $M$ is {\em perfect} if 
$|\parcut{G}{v} \cap M | = 1$ for each $v\in V(G)$.  
A perfect matching is also referred to as a {\em $1$-factor}. 
A $1$-factor is a maximum matching, however the converse does not hold. 
A graph is said to be {\em factorizable} if it has a $1$-factor. 

\subsection{Grafts and Joins} 

Let $(G, T)$ be a pair of a graph $G$ and a set $T\subseteq V(G)$.  
A {\em join} of  $(G, T)$ is a set $F\subseteq E(G)$ 
such that $|\parcut{G}{v}\cap F |$ is odd for each $v\in T$, 
and is even for each $v \in V(G)\setminus T$. 
The pair $(G, T)$ is called a {\em graft} if 
each connected component of $G$ has an even number of vertices from $T$. 
When discussing a graft $(G, T)$, 
we often treat the items or properties of $G$ as if they are from $(G, T)$. 
For example, we call an edge of $G$ an edge of $(G, T)$.  
The following statement is classical and 
can be confirmed rather easily by parity arguments; 
see Lov\'asz and Plummer~\cite{lp1986} or Schrijver~\cite{schrijver2003}.  
\begin{fact} \label{fact:join} 
The pair $(G, T)$ has a join if and only if 
  it is a graft. 
\end{fact}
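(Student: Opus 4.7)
The plan is to give a standard parity argument in both directions, using the handshake identity for the necessity and a path-symmetric-difference construction for the sufficiency.

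For the necessity direction, suppose $F$ is a join of $(G,T)$. Fix a connected component $C$ of $G$ and sum $|\parcut{G}{v}\cap F|$ over $v\in V(C)$. Since $C$ is a connected component there is no edge of $F$ with exactly one end in $V(C)$, so each edge of $F\cap E(C)$ contributes $2$ and the sum equals $2|F\cap E(C)|$, an even number. On the other hand, by the definition of a join this sum is congruent modulo $2$ to $|V(C)\cap T|$, since the contribution is odd precisely at vertices of $T$. Hence $|V(C)\cap T|$ is even, so $(G,T)$ is a graft.

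For the sufficiency direction, assume each component $C$ of $G$ has an even number of $T$-vertices. The first step is the easy observation that if $F_1$ is an $S_1$-join and $F_2$ is an $S_2$-join (in the obvious sense), then $F_1\triangle F_2$ is an $(S_1\triangle S_2)$-join, because $|\parcut{G}{v}\cap (F_1\triangle F_2)|\equiv |\parcut{G}{v}\cap F_1|+|\parcut{G}{v}\cap F_2|\pmod 2$. In particular, for any two vertices $s,t$ in the same connected component of $G$, the edge set of any $s$--$t$ path is a $\{s,t\}$-join. Now for each component $C$ pair the vertices of $T\cap V(C)$ arbitrarily into pairs (possible because $|T\cap V(C)|$ is even), choose an $s$--$t$ path in $C$ for each pair $\{s,t\}$, and take the symmetric difference of all these path edge sets over all pairs and all components. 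By the symmetric-difference principle this yields a $T$-join of $(G,T)$.

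Neither direction presents a real obstacle; the only mild subtlety is making clear that the symmetric-difference-of-paths construction combines correctly across pairs and components, but this is immediate from the additivity of the parity condition modulo $2$. Thus the statement follows.
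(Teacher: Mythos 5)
Your proof is correct and is exactly the standard parity argument the paper has in mind: the paper gives no proof of this fact at all, merely remarking that it ``can be confirmed rather easily by parity arguments'' and citing Lov\'asz--Plummer and Schrijver. Both of your directions are sound --- the component-wise handshake count for necessity and the symmetric-difference-of-paths construction for sufficiency --- so nothing further is needed.
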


A {\em minimum} join is a join with the minimum number of edges. 
We denote by $\nu(G, T)$ the number of edges in a minimum join of a graft $(G, T)$. 

\begin{remark} 
A join of a graft $(G, T)$ is often referred to as 
a {\em $T$-join} of $G$. 
\end{remark} 

\subsection{Relationship between $1$-Factors and Joins} 

\begin{observation} \label{obs:factorizable}  
If $G$ is a factorizable graph and $T= V(G)$, 
then any $F\subseteq E(G)$ is a minimum join of the graft $(G, T)$ 
if and only if $F$ is a $1$-factor of $G$. 
\end{observation}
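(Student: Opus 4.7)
The plan is to use a double-counting/parity argument together with the existence of a $1$-factor to show that minimum joins under $T = V(G)$ must be exactly $1$-factors.

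First I would verify the easy direction: if $F$ is a $1$-factor, then $|\parcut{G}{v} \cap F| = 1$ for every $v \in V(G)$, so in particular this number is odd for every $v \in T = V(G)$, hence $F$ is a join of $(G, T)$. Moreover $|F| = |V(G)|/2$, which is the smallest possible size for any join, as I will show next.

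Next I would establish the lower bound. For any join $F'$ of $(G, T)$ with $T = V(G)$, each vertex $v$ has $|\parcut{G}{v} \cap F'|$ odd, hence at least $1$. Summing over all vertices gives $\sum_{v \in V(G)} |\parcut{G}{v} \cap F'| \ge |V(G)|$. Since this sum equals $2|F'|$ by handshake, $|F'| \ge |V(G)|/2$. Combined with the previous paragraph, any $1$-factor attains this bound, so $\nu(G, T) = |V(G)|/2$ and every $1$-factor is a minimum join.

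For the reverse direction, suppose $F$ is a minimum join. Then $|F| = \nu(G, T) = |V(G)|/2$ by the argument above (using the existence of a $1$-factor, since $G$ is factorizable). The same handshake computation $2|F| = \sum_v |\parcut{G}{v} \cap F|$ forces equality in the bound $|\parcut{G}{v} \cap F| \ge 1$ for every $v$, so each vertex is incident to exactly one edge of $F$; that is, $F$ is a $1$-factor.

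There is no real obstacle here: the statement is essentially a parity-plus-counting observation, and all that must be checked is that having $T = V(G)$ forces every vertex degree in $F$ to be odd, so the trivial lower bound $|F| \ge |V(G)|/2$ is tight precisely for $1$-factors. I would present the whole argument as a short chain of (in)equalities, making clear that factorizability of $G$ is used only to guarantee that the bound $\nu(G, T) = |V(G)|/2$ is actually achieved.
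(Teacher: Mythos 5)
Your argument is correct: the paper states this as an unproved observation, and your handshake/parity computation ($2|F| = \sum_v |\parcut{G}{v}\cap F| \ge |V(G)|$, with equality forced for a minimum join once a $1$-factor witnesses $\nu(G,T)=|V(G)|/2$) is exactly the intended routine verification. Nothing is missing.
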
 

That is, minimum joins of grafts 
are a generalization of $1$-factors of factorizable graphs.

\section{General Kotzig-Lov\'asz Decomposition for $1$-Factors} \label{sec:1fackl}
In this section, 
we explain the original general Kotzig-Lov\'asz decomposition for $1$-factors.  
An  edge  from a factorizable graph is {\em allowed}
 if it can be contained in a $1$-factor. 
Two vertices $u$ and $v$ are said to be {\em factor-connected} 
if there is a path between $u$ and $v$ in which every edge is allowed. 
A factorizable graph is said to be {\em factor-connected} 
if every two vertices are factor-connected. 
A {\em factor-connected component} or {\em factor-component}  
is a maximal factor-connected subgraph.  
Hence, a factorizable graph 
 consists of its factor-components, which are  disjoint, 
and  edges joining distinct factor-components, which are non-allowed. 
The concept of factor-components can be defined in an alternative manner as follows. 
Let $G$ be a factorizable graph. 
Let $\hat{M}$ be the set of allowed edges of $G$. 
A factor-component is a subgraph of $G$ 
induced by $V(C)$, where $C$ is a connected component of a subgraph of $G$ 
determined by $\hat{M}$.

\begin{definition} 
Let $G$ be a factorizable graph. 
A binary relation $\gsim{G}$ over $V(G)$ is defined as follows: 
for $u,v\in V(G)$, 
$u\gsim{G} v$ if $u$ and $v$ are factor-connected and $G-u-v$ is not factorizable. 
\end{definition} 

\begin{theorem}[Kita~\cite{kitacathedral, DBLP:journals/corr/abs-1205-3816, DBLP:conf/isaac/Kita12};  Kotzig~\cite{kotzig1959a, kotzig1959b, kotzig1960}, Lov\'asz~\cite{lovasz1972b}] \label{thm:generalizedkl} 
For any factorizable graph $G$, 
the binary relation $\gsim{G}$ is an equivalence relation over $V(G)$. 
\end{theorem}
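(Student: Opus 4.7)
The strategy is to verify the three axioms of an equivalence relation in turn. \emph{Reflexivity} is immediate: every vertex is factor-connected to itself, and since $G$ is factorizable $|V(G)|$ is even, so $G - u$ has odd order and cannot be factorizable. \emph{Symmetry} is built into the definition, which is symmetric in its two arguments. Thus the whole content of the theorem lies in \emph{transitivity}, which I treat below.

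Suppose $u \gsim{G} v$ and $v \gsim{G} w$. Since factor-connectedness is itself an equivalence relation on $V(G)$ (concatenate allowed-edge paths at a common endpoint), $u$ and $w$ are factor-connected. What remains is to show that $G - u - w$ is not factorizable. Assume for contradiction that $M$ is a perfect matching of $G - u - w$, and fix any perfect matching $M^{*}$ of $G$. Because $u$ and $w$ are precisely the vertices of $V(G)$ uncovered by $M$ but covered by $M^{*}$, while every other vertex is covered by both, the symmetric difference $M \triangle M^{*}$ decomposes into a single $M$-$M^{*}$ alternating path $P$ from $u$ to $w$, together with a disjoint family of alternating cycles; since neither $u$ nor $w$ has an $M$-edge in $P$, the path $P$ begins and ends with an $M^{*}$-edge.

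I would then case-split on whether $v$ lies on $P$. If $v \in V(P)$, split $P$ at $v$ into the $u$-to-$v$ subpath $P_{1}$ and the $v$-to-$w$ subpath $P_{2}$. A direct parity count on the alternating sequence of edges along $P$ shows that exactly one of $P_{1}, P_{2}$ begins and ends with an $M^{*}$-edge; taking the symmetric difference of $M^{*}$ with that subpath produces a perfect matching either of $G - u - v$ or of $G - v - w$, contradicting $u \gsim{G} v$ or $v \gsim{G} w$ respectively. This is essentially the classical Kotzig-Lov\'asz argument and handles everything provided $v$ happens to fall on $P$.

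The main obstacle is the remaining possibility that $v \notin V(P)$, which can genuinely occur in the non-factor-connected setting; here $v$ is matched by the same edge $vv'$ in $M$ and in $M^{*}$, so no parity manipulation of $P$ alone involves $v$. To push through, I plan to exploit the factor-connectedness of $u, v, w$ granted by the hypothesis. Concretely, one works with $N := M \setminus \{vv'\}$, a perfect matching of $G - u - v - v' - w$, and analyses $N \triangle M^{*}$: the four deficient vertices $u,v,v',w$ are endpoints of exactly two alternating paths, paired in one of three possible ways. Two of these pairings immediately yield perfect matchings of $G - u - v$ or $G - v - w$, contradicting the hypothesis; only the pairing $(uw, vv')$ is consistent. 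The delicate step — and the expected main difficulty — is to show that this residual pairing can always be avoided by a suitable choice of $M^{*}$, and this is where factor-connectedness of $u, v, w$ (which supplies an allowed path reaching $v$ from $u$ or $w$) is invoked to perturb $M^{*}$ along alternating structures until $v$ is forced onto the $u$-$w$ alternating path, reducing to the previous case.
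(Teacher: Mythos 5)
Your reflexivity and symmetry arguments are fine, and your treatment of transitivity in the case where $v$ lies on the alternating $u$--$w$ path $P\subseteq M\triangle M^{*}$ is the correct classical exchange argument. The problem is that the remaining case, $v\notin V(P)$, is precisely where the entire difficulty of the theorem is concentrated, and you have not proved it: you explicitly leave the elimination of the residual pairing $(uw,\,vv')$ as a ``delicate step'' to be handled by an unspecified perturbation of $M^{*}$. As written, that is a plan, not a proof. There is also a small error in the setup of that case: if $v\notin V(P)$ it does not follow that $v$ is matched by the same edge in $M$ and in $M^{*}$, since $v$ may lie on one of the alternating cycles of $M\triangle M^{*}$ (repairable by first switching along those cycles, but it must be said). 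More importantly, the proposed perturbation has no mechanism: the mate $v'$ depends on the matching chosen, the surviving pairing can persist under naive alternating switches of $M^{*}$, and factor-connectedness of $u,v,w$ enters your argument only as a licence to ``perturb.'' Some quantitative use of factor-connectedness is unavoidable here, since without it the relation is not transitive; so this is a genuine gap, not a routine verification left to the reader.

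For comparison, the paper does not prove Theorem~\ref{thm:generalizedkl} directly at all: it is quoted from the literature, and the paper's contribution is the generalization (Theorem~\ref{thm:tjoinkl}), which recovers Theorem~\ref{thm:generalizedkl} through Observation~\ref{obs:reduction}. The way the paper closes exactly the gap you face is Seb\"o's distance decomposition rooted at $u$: the hypothesis $u\gtsim{G}{T}v$ places $v$ in $\levelr{0}{u}\cap V(Q_u)$, the negation of $u\gtsim{G}{T}w$ places $w$ in some component $K$ of $Q_u-\levelr{0}{u}$, and the comb-bipartite structure of $(Q_u',T_u')$ together with Lemma~\ref{lem:elemcomb2dist} yields a path of weight $-1$ from $v$ to $[K]$, which Lemma~\ref{lem:pathext} lifts to a negative-weight $v$--$w$ path in $G$, contradicting $v\gtsim{G}{T}w$. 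That decomposition is the concrete machinery your ``force $v$ onto the path'' step would require; to complete an elementary matching-theoretic proof you would need to supply an argument of comparable strength (e.g.\ Kita's original one), not merely assert that a suitable $M^{*}$ exists.
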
 

The family of equivalence classes by $\gsim{G}$ is called 
the {\em general Kotzig-Lov\'asz decomposition} 
or simply the {\em Kotzig-Lov\'asz decomposition} of a factorizable graph $G$. 
The restricted statement of Theorem~\ref{thm:generalizedkl} in which $G$ is a factor-connected graph 
was found by Kotzig~\cite{kotzig1959a, kotzig1959b, kotzig1960} and Lov\'asz~\cite{lovasz1972b}. 
In this case, we often call the structure the {\em classical Kotzig-Lov\'asz decomposition}. 
Although a factorizable graph is made up of factor-components, 
the general Kotizg-Lov\'asz decomposition of a factorizable graph $G$ 
is not the mere disjoint union of 
the classical Kotzig-Lov\'asz decomposition of each factor-component of $G$. 
As will be observed in Section~\ref{sec:tjoinkl},  
the first one is, in general, a refinement of the second one.

\section{Preliminaries on Distances} \label{sec:dist}

\subsection{Fundamentals of Distances} 
In this section, 
we introduce the concept of the distance between two vertices in a graft 
 where the edge weight is determined by a given minimum join 
 and explain its basic properties. 
Let $(G, T)$ be a graft, and let $F$ be a minimum join of $(G, T)$.

\begin{definition} 
For each edge $e\in E(G)$, let $w_F(e) := -1$ if $e\in F$,   
and let $w_F(e) := 1$ otherwise. 
Given a subgraph $C$ of $G$, which is typically a path or circuit, 
$w_F(C)$ denotes $\Sigma_{e\in E(C)} w_F(e)$. 
For $u, v\in V(G)$, 
the {\em distance} between $u$ and $v$ in $(G, T)$ regarding $F$ 
is the minimum value of $w_F(P)$, where $P$ is taken over all paths between $u$ and $v$, 
and is denoted by $\distgtf{G}{T}{F}{u}{v}$. 
Note that if $u = v$, then $\distgtf{G}{T}{F}{u}{v} = 0$. 
\end{definition}

The distance between two vertices might be defined on the basis of trails instead of paths. 
However, the next statement shows that if $F$ is a minimum join, 
the concepts of distances defined by paths and trails coincide. 

\begin{proposition}
Let $(G, T)$ be a graft, and let $F\subseteq E(G)$. 
Then,  $F$ is a minimum join of $(G, T)$ 
if and only if $w_F(C) \ge 0$ holds for every circuit $C$. 
\end{proposition}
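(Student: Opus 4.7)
The plan is to exploit in both directions the fact that any subgraph in which every vertex has even degree, in particular the edge set of a circuit or the symmetric difference of two joins, decomposes into edge-disjoint circuits, and that the symmetric difference of a join with such a subgraph is again a join.

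For the ``only if'' direction I would argue by contradiction. Suppose $F$ is a minimum join but some circuit $C$ satisfies $w_F(C) < 0$. Every vertex has even degree in $E(C)$, so the parity of $|\parcut{G}{v} \cap (F \triangle E(C))|$ agrees with that of $|\parcut{G}{v} \cap F|$ at every $v \in V(G)$, which shows that $F \triangle E(C)$ is still a join of $(G, T)$. On the other hand, a direct count gives $|F \triangle E(C)| = |F| + w_F(C) < |F|$, contradicting the minimality of $F$.

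For the ``if'' direction, assuming (implicitly) that $F$ is a join with $w_F(C) \ge 0$ for every circuit $C$, I would compare $F$ with an arbitrary join $F'$. Since $F$ and $F'$ induce the same parity at every vertex, the edge set $F \triangle F'$ meets every $\parcut{G}{v}$ in an even number of edges, and therefore decomposes into edge-disjoint circuits $C_1, \ldots, C_k$ by the standard cycle-decomposition argument for even-degree subgraphs. Each edge of $C_i$ lies in exactly one of $F \setminus F'$ or $F' \setminus F$, so
\[
w_F(C_i) = |E(C_i) \cap (F' \setminus F)| - |E(C_i) \cap (F \setminus F')|.
\]
Summing over $i$ and using edge-disjointness yields $\sum_{i=1}^k w_F(C_i) = |F' \setminus F| - |F \setminus F'| = |F'| - |F|$. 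The hypothesis forces the left-hand side to be non-negative, so $|F| \le |F'|$ for every join $F'$, proving $F$ to be minimum.

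The work is mostly bookkeeping; the only external ingredient is the classical decomposition of an even-degree subgraph into edge-disjoint circuits, which can be cited from standard references such as Schrijver. No genuine obstacle arises, though some care is needed with the implicit requirement that $F$ be a join in the ``if'' direction, since otherwise the circuit condition alone does not preclude $F$ from failing the parity constraint at some vertex.
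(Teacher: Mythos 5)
Your proof is correct. The paper itself states this proposition without proof, treating it as a classical fact about minimum $T$-joins (it is the standard optimality criterion, found e.g.\ in Schrijver), and your argument is precisely the standard one: symmetric difference with a negative circuit improves a join, and the symmetric difference of two joins decomposes into edge-disjoint circuits whose weights sum to the difference in cardinality. You are also right to flag that the ``if'' direction needs the implicit hypothesis that $F$ is a join --- as literally stated the proposition would fail for, say, $F=\emptyset$ with $T\neq\emptyset$ --- and your handling of that point is the correct reading of the statement.
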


The next lemma states that 
the distances between two vertices 
does not depend on which minimum join is given.

\begin{lemma}[Seb\"{o}~\cite{DBLP:journals/jct/Sebo90}] \label{lem:dist2nu} 
Let $(G, T)$ be a graft and $F$ be a minimum join of $(G, T)$.  
Then, for any $x, y\in V(G)$ with $x\neq y$, 
$\distgtf{G}{T}{F}{x}{y} = \nu(G, T\triangle \{x, y\}) - \nu(G, T)$. 
\end{lemma}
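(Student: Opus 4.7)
The plan is to prove both inequalities separately, using symmetric differences with $F$ in each direction.

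For the upper bound $\nu(G, T \triangle \{x,y\}) - \nu(G, T) \le \distgtf{G}{T}{F}{x}{y}$, I would take a shortest $xy$-path $P$, i.e.\ one achieving $w_F(P) = \distgtf{G}{T}{F}{x}{y}$, and set $F' := F \triangle E(P)$. A parity-at-each-vertex check shows that $F'$ flips the parity of $|\parcut{G}{v} \cap \cdot|$ exactly at the endpoints of $P$, hence $F'$ is a join of $(G, T\triangle\{x,y\})$. Since $w_F(P) = |E(P)| - 2|F\cap E(P)|$, one computes directly that $|F'| = |F \triangle E(P)| = |F| + w_F(P)$. Therefore $\nu(G, T \triangle \{x,y\}) \le |F'| = \nu(G, T) + \distgtf{G}{T}{F}{x}{y}$.

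For the lower bound, let $F^\star$ be any minimum join of $(G, T \triangle \{x,y\})$ and consider $H := F \triangle F^\star$. Parity analysis shows that in the subgraph spanned by $H$, exactly $x$ and $y$ have odd degree. By a standard Eulerian-style decomposition, the edges of $H$ split into an edge-disjoint $xy$-trail $W$ together with closed trails $C_1, \ldots, C_k$. A direct count gives $w_F(H) = |F^\star \setminus F| - |F \setminus F^\star| = |F^\star| - |F| = \nu(G, T\triangle\{x,y\}) - \nu(G,T)$. The preceding proposition (minimality of $F$ is equivalent to $w_F(C) \ge 0$ for every circuit) applies in particular to each closed trail $C_i$ after decomposing it into circuits, so $w_F(C_i) \ge 0$ and hence $w_F(W) \le w_F(H)$.

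It remains to pass from the trail $W$ to an actual $xy$-path without increasing the $F$-weight, which I expect to be the main technical point. The idea is iterative: if $W$ visits a vertex $v$ twice, decompose $W = W_1 \cdot C \cdot W_2$ where $C$ is a closed subtrail at $v$; by the circuit proposition (again applied to the circuits in $C$), $w_F(C) \ge 0$, so the shorter trail $W_1 \cdot W_2$ has $w_F$ value at most $w_F(W)$. Iterating until no vertex is repeated yields an $xy$-path $P$ with $\distgtf{G}{T}{F}{x}{y} \le w_F(P) \le w_F(W) \le \nu(G, T\triangle\{x,y\}) - \nu(G,T)$, completing the proof. (If $x$ and $y$ lie in distinct connected components then both sides are $+\infty$ by convention, so we may assume $x$ and $y$ are in the same component, which guarantees that $(G, T\triangle\{x,y\})$ is again a graft by parity and hence $\nu$ is finite via Fact~\ref{fact:join}.)
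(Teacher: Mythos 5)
The paper does not prove Lemma~\ref{lem:dist2nu}; it is quoted from Seb\H{o}'s work with a citation, so there is no in-paper argument to compare against. Your proof is correct and is the standard symmetric-difference argument one would expect: the upper bound via $F\triangle E(P)$ for a minimum-weight path $P$, and the lower bound via $F\triangle F^\star$, whose odd-degree vertices are exactly $x$ and $y$, decomposed into an $xy$-trail plus closed trails whose constituent circuits have nonnegative $w_F$-weight by the Proposition preceding the lemma (which the paper also states without proof, so it is fair to invoke). The only step worth tightening is the trail-to-path reduction at the end; it works as you describe, but you can avoid it entirely by adding an auxiliary edge $xy$ to $F\triangle F^\star$, so that all degrees become even and the edge set decomposes into edge-disjoint circuits: the circuit containing the auxiliary edge yields an $xy$-path directly, and the remaining circuits each have $w_F\ge 0$. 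Your handling of the degenerate case where $x$ and $y$ lie in different components (both sides infinite, or the statement vacuous) is a reasonable reading, since then $(G,T\triangle\{x,y\})$ is not a graft and no path exists.
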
 

Under Lemma~\ref{lem:dist2nu}, 
we abbreviate $\distgtf{G}{T}{F}{x}{y}$ 
to $\distgt{G}{T}{x}{y}$ for any $x, y\in V(G)$.

\subsection{Comb-Bipartite Grafts}

In this section, we introduce the concept of {\em comb-bipartite grafts} 
and their basic properties. 

\begin{definition} 
We say that a graft $(G, T)$ is {\em bipartite} 
if $G$ is a bipartite graph. 
We call $A$ and $B$ {\em color classes} of a bipartite graft $(G, T)$ 
if $A$ and $B$ are color classes of $G$. 
We say that a graft $(G, T)$ is {\em comb-bipartite} 
if $G$ is a bipartite graft with color classes $A$ and $B$, 
the color class $B$ is a subset of $T$, and $\nu(G,T) = |B|$. 
Here, we call $A$ and $B$ the {\em spine} and {\em tooth} sets of 
the comb-bipartite graft $(G, T)$, respectively. 
\end{definition}

The notion of comb-bipartite grafts is closely related to  {\em comb-critical towers} 
introduced by Seb\"o~\cite{DBLP:journals/jct/Sebo90}. A {\em tower} is a pair of a connected graph 
and a set of an odd number of vertices.  
We introduce  comb-bipartite grafts so that 
we can discuss the property of  grafts. 

By definition, the next statement about comb-bipartite grafts is easily confirmed. 
\begin{lemma} \label{lem:comb}
The following three properties are equivalent for 
a bipartite graft $(G, T)$ with color classes $A$ and $B$. 
\begin{rmenum} 
\item The graft $(G, T)$ is comb-bipartite with spine set $A$ and tooth set $B$. 
\item For any minimum join $F$ of $(G, T)$, each $v\in B$ satisfies $|\parcut{G}{v}\cap F| = 1$. 
\item For some minimum join $F$ of $(G, T)$, each $v\in B$ satisfies $|\parcut{G}{v}\cap F| = 1$. 
\end{rmenum} 
\end{lemma}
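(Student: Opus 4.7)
The proof plan is to show (i) $\Rightarrow$ (ii) $\Rightarrow$ (iii) $\Rightarrow$ (i), with the key observation being that in a bipartite graph with color classes $A$ and $B$, every edge has exactly one endpoint in $B$, so for any edge set $F \subseteq E(G)$ we have
\[
|F| = \sum_{v\in B} |\parcut{G}{v}\cap F|.
\]
This identity is what links the cardinality $\nu(G,T)$ to the local degrees in $B$ under any join.

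For (i) $\Rightarrow$ (ii), I would take an arbitrary minimum join $F$ (which exists by Fact~\ref{fact:join}). Since $B \subseteq T$, each $v \in B$ satisfies $|\parcut{G}{v}\cap F|$ is odd, hence at least $1$. Summing these lower bounds over $B$ and applying the displayed identity gives $|F| \ge |B|$. But $|F|=\nu(G,T)=|B|$ by assumption, so every term in the sum must equal exactly $1$.

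The implication (ii) $\Rightarrow$ (iii) is immediate from the existence of a minimum join guaranteed by Fact~\ref{fact:join}. For (iii) $\Rightarrow$ (i), I would argue in two steps. First, if some minimum join $F$ has $|\parcut{G}{v}\cap F|=1$ for every $v\in B$, then by the defining property of a join (odd intersection exactly on $T$), every $v \in B$ must belong to $T$, giving $B \subseteq T$. Second, applying the identity displayed above to this $F$ yields $|F|=|B|$, and since $F$ is minimum, $\nu(G,T)=|B|$. Hence $(G,T)$ meets the definition of comb-bipartite with spine $A$ and tooth $B$.

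I do not anticipate a real obstacle here: the statement is essentially an unpacking of definitions together with bipartite edge-counting, and the only non-routine ingredient is recognizing that the bipartite structure forces the degree-sum identity. The most care is needed in (iii) $\Rightarrow$ (i) to verify both halves of the definition of comb-bipartite ($B \subseteq T$ \emph{and} $\nu(G,T)=|B|$) from the single hypothesis on $F$, but both fall out immediately once the identity is in hand.
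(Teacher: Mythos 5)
Your proof is correct, and since the paper dismisses this lemma as ``easily confirmed'' by definition without supplying an argument, your bipartite degree-sum identity $|F|=\sum_{v\in B}|\delta_G(v)\cap F|$ is precisely the intended routine verification. All three implications are handled properly, including the two halves of the definition ($B\subseteq T$ and $\nu(G,T)=|B|$) in the direction (iii)~$\Rightarrow$~(i).
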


Accordingly, the next lemma follows. 

\begin{lemma} \label{lem:combpath} 
Let $(G, T)$ be a comb-bipartite graft with spine set $A$ and tooth set $B$, 
and let $F$ be a minimum join of $(G, T)$. 
If $P$ is a path with ends $s\in A$ and $t\in B$ with $w_F(P) = -1$, 
then $ |\parcut{P}{v}\cap F | = 1$ holds for any $v\in V(P)\cap B$. 
\end{lemma}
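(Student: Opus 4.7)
The plan is a direct double-counting argument combining bipartiteness with the characterization of comb-bipartite grafts given in Lemma~\ref{lem:comb}.

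First, I would exploit the fact that $(G,T)$ is bipartite with color classes $A$ and $B$: along the path $P$, the vertices must alternate between $A$ and $B$. Since $P$ starts at $s\in A$ and ends at $t\in B$, its length is odd, say $|E(P)|=2k+1$. Consequently $|V(P)\cap B|=k+1$, and every edge of $P$ has exactly one endpoint in $B$.

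Next, translate the hypothesis $w_F(P)=-1$ into a count of $F$-edges in $P$. By the definition of $w_F$,
\[
w_F(P) = |E(P)\setminus F| - |E(P)\cap F| = |E(P)| - 2\,|E(P)\cap F|,
\]
so $|E(P)\cap F|=(|E(P)|+1)/2 = k+1$. Meanwhile, Lemma~\ref{lem:comb} applied to the minimum join $F$ gives $|\parcut{G}{v}\cap F|=1$ for every $v\in B$, and therefore $|\parcut{P}{v}\cap F|\le 1$ for every $v\in V(P)\cap B$.

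Now sum the $F$-incidences across $B$-vertices of $P$. Since each edge of $P$ has exactly one endpoint in $B$,
\[
\sum_{v\in V(P)\cap B} |\parcut{P}{v}\cap F| \;=\; |E(P)\cap F| \;=\; k+1 \;=\; |V(P)\cap B|,
\]
while each summand is at most $1$. Equality in each term forces $|\parcut{P}{v}\cap F|=1$ for every $v\in V(P)\cap B$, as claimed. There is no real obstacle here; the only thing to be careful about is treating the endpoint $t$ correctly (where $|\parcut{P}{t}|=1$), but the upper bound from Lemma~\ref{lem:comb} applies uniformly and the counting identity handles $t$ together with the interior $B$-vertices.
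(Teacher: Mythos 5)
Your proof is correct and takes the route the paper intends: the paper states that Lemma~\ref{lem:combpath} ``follows'' from Lemma~\ref{lem:comb} without writing out the argument, and your double-counting of $F$-edges over $V(P)\cap B$ (using that every edge of the alternating path has exactly one end in $B$, that $|E(P)\cap F|=|V(P)\cap B|$ from $w_F(P)=-1$, and that Lemma~\ref{lem:comb} caps each term at $1$) supplies exactly the missing details. No gaps.
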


\subsection{Seb\"{o}'s Distance Decomposition} 

In this section, we present a known statement about distances 
that is taken from a profound theorem by Seb\"o~\cite{DBLP:journals/jct/Sebo90}. 
Given a specific vertex $r$ in a graft, 
we can determine a partition of the vertex set according 
to the distance from $r$. 
Seb\"o provided the property of distances by revealing the structure of this partition.  

\begin{definition} 
Let $(G, T)$ be a graft and $F$ be a minimum join of $(G,T)$. 
Let $r\in V(G)$. 
We define $\levelr{0}{r} := \{ x\in V(G): \distgt{G}{T}{r}{x} = 0 \}$. 
We also define 
 $\laysigr{-}{r} := \{ x \in V(G): \distgt{G}{T}{r}{x} < 0 \}$ 
 and $\layler{0}{r} := \levelr{0}{r} \dot\cup \laysigr{-}{r}$. 
We denote by $Q_r$  the connected component of $G[\layler{0}{r}] - E[\levelr{0}{r}]$ that contains $r$. 
Further, we denote by $Q_r'$ the  graph obtained from $Q_r$ 
by deleting all edges from $E[\levelr{0}{r}]$ and contracting 
each connected component of $Q_r - \levelr{0}{r}$ into one vertex,  
namely, $Q_r' := Q_r/\conn{Q_r - \levelr{0}{r}} - E[\levelr{0}{r}]$. 
For each $K \in \conn{Q_r - \levelr{0}{r}}$,  
 let $[K]$ be the contracted vertices of $Q_r'$ that correspond to $K$, 
and let $T'_r = (T\cap V(Q_r)\cap U_0) \cup \{[K]: K\in \conn{Q_r - \levelr{0}{r}} \}$.  
Note that $(Q_r', T_r')$ is a bipartite graft. 
\end{definition}

The next theorem  is a part of the main result obtained by Seb\"{o}~\cite{DBLP:journals/jct/Sebo90}.

\begin{theorem}[Seb\"o~\cite{DBLP:journals/jct/Sebo90}] \label{thm:sebo} 
Let $(G, T)$ be a graft and $F$ be a minimum join of $(G,T)$. 
Let $r\in V(G)$. 
\begin{rmenum} 
\item \label{item:sebo:separating} Then, no edges in $\parcut{G}{Q_r}$ are allowed in $(G, T)$. 
\item \label{item:sebo:extreme} No edges of $Q_r[\levelr{0}{r}]$ are allowed in $(G, T)$. 
\item \label{item:sebo:join2comb}  For each $K \in  \conn{Q_r - \levelr{0}{r}}$, $|\parcut{G}{K} \cap F | = 1$; 
let $r_K \in V(K)$ and $s_K\in \levelr{0}{r}$ be the vertices such that $\{s_Kr_K \} = \parcut{G}{K} \cap F$. 
\item \label{item:sebo:comb} 
The graft $(Q_r', T'_r)$ is  comb-bipartite, 
whose tooth set is $\{[K]: K\in   \conn{Q_r - \levelr{0}{r}} \}$, 
and  $\{ s_Kr_K : K\in    \conn{Q_r - \levelr{0}{r}} \}$ forms a minimum join of $(Q_r', T'_r)$. 
\item \label{item:sebo:join2negcomp} For each $K\in  \conn{Q_r - \levelr{0}{r}}$, 
 $F\cap E(K)$ is a minimum join of the graft $(K, ({T\cap V(K)})\triangle \{r_K\})$; and, 
\item \label{item:sebo:negcomp2dist} 
for any $x\in V(K)$, 
$\distgtf{K}{(T\cap V(K)) \triangle \{ r_K \}}{F\cap E(K)}{x}{r_K} \le 0$.  
\end{rmenum} 
\end{theorem}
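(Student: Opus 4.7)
The plan is to first establish two basic tools and then handle the six parts in order. The first tool is the triangle inequality $\distgt{G}{T}{x}{z} \le \distgt{G}{T}{x}{y} + \distgt{G}{T}{y}{z}$ for all $x, y, z \in V(G)$, which follows from decomposing any walk into a path plus edge-disjoint circuits and using that every circuit $C$ has $w_F(C) \ge 0$ by the minimum-join characterization just stated. The second tool is a characterization of \emph{allowed} edges: an edge $e = uv \notin F$ is allowed in $(G,T)$ if and only if some $u$-$v$ path $P$ avoiding $e$ has $w_F(P) = -1$, equivalently some circuit $C \ni e$ has $w_F(C) = 0$. This can be derived from the standard fact that any two minimum joins differ by edge-disjoint $w_F$-weight-zero circuits, using the identity $|F \triangle C| = |F| + w_F(C)$.

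Given these tools, parts (i) and (ii) reduce to short case analyses. For (i), take $e = uv \in \parcut{G}{Q_r}$ with $u \in V(Q_r)$: either $v \notin \layler{0}{r}$, in which case the triangle inequality gives $\distgt{G}{T}{u}{v} \ge 1$ and rules out both $e \in F$ and any $w_F = -1$ reroute; or $v \in \layler{0}{r} \setminus V(Q_r)$, which by the construction of $Q_r$ as a component of $G[\layler{0}{r}] - E[\levelr{0}{r}]$ forces $e \in E[\levelr{0}{r}]$, and concatenating a hypothetical reroute with a zero-weight $r$-to-$u$ path yields $\distgt{G}{T}{r}{v} \le -1$, contradicting $v \in \levelr{0}{r}$. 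Part (ii) is immediate: for $u, v \in \levelr{0}{r}$ the triangle inequality forces $\distgt{G}{T}{u}{v} \ge 0$, which simultaneously excludes $e \in F$ and any $w_F = -1$ reroute needed for a non-$F$ edge to be allowed.

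Part (iii) is where I expect the main difficulty. By (i) and (ii), every edge in $F \cap \parcut{G}{K}$ must go from $V(K) \subseteq \laysigr{-}{r}$ to $V(Q_r) \cap \levelr{0}{r}$, and a parity argument on $F$ as a join of $(G,T)$ pins down $|F \cap \parcut{G}{K}|$ only modulo two. To achieve \emph{exactly one}, my plan is to pick $v \in V(K)$ with smallest $\distgt{G}{T}{r}{v}$, analyze how a shortest $r$-to-$v$ path accumulates weight as it enters and possibly leaves $V(K)$, and use an extremality rerouting argument: two or more $F$-edges on $\parcut{G}{K}$ would allow a reroute that strictly decreases $|F|$, contradicting its minimality. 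The delicate balance of triangle inequality and allowed-edge characterization needed here is the crux of the theorem.

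Once (iii) is in hand, parts (iv)--(vi) are organizational. For (iv), the bipartite structure of $(Q_r', T_r')$ is built into its construction, $\{s_K r_K\}$ has cardinality equal to the tooth count, and a per-vertex parity check (using that $F$ has no edges in $E[\levelr{0}{r}]$ or $\parcut{G}{Q_r}$) confirms it is a join; minimality follows because each tooth $[K]$ must be covered, and Lemma~\ref{lem:comb} then delivers comb-bipartiteness. For (v), $F \cap E(K)$ is a join of $(K, (T \cap V(K)) \triangle \{r_K\})$ by a per-vertex parity check (the asymmetry at $r_K$ is precisely due to deleting $s_K r_K$), and minimality follows by splicing any hypothetical smaller join with $F \setminus E(K)$ to contradict minimality of $F$. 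Finally (vi) follows by prepending the edge $s_K r_K$ (of $w_F$-weight $-1$) to a shortest $r$-to-$x$ path in $(G, T)$ and restricting to $E(K)$, producing an $r_K$-to-$x$ path in $K$ of weight $\distgt{G}{T}{r}{x} + 1 \le 0$.
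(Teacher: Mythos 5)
The first thing to say is that the paper does not prove Theorem~\ref{thm:sebo} at all: it is imported from Seb\"o's 1990 paper and used as a black box, so there is no in-paper proof to compare yours against. Judged on its own terms, your proposal is essentially sound for parts (i) and (ii): the triangle inequality together with the circuit characterization of allowed edges does dispatch them, and your case split in (i) correctly identifies that an edge of $\parcut{G}{Q_r}$ with both ends in $\layler{0}{r}$ must lie in $E[\levelr{0}{r}]$. (One repair: a walk cannot in general be decomposed into a path plus edge-disjoint circuits --- traversing an $F$-edge back and forth is a closed walk of weight $-2$ --- so the triangle inequality should be proved edge by edge, via $\distgt{G}{T}{x}{v}\le\distgt{G}{T}{x}{u}+w_F(uv)$ along a shortest $y$--$z$ path, invoking conservativeness only for genuine circuits.) Parts (iv) and (v) are correct as reductions, but they stand entirely on part (iii).

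The genuine gap is part (iii), and you say so yourself: what you offer is a plan, not a proof. Note that the parity argument does not even determine the residue class: $|\parcut{G}{K}\cap F|\equiv|T\cap V(K)|\pmod 2$, and the oddness of $|T\cap V(K)|$ is itself part of what must be established. The assertion that two or more $F$-edges in $\parcut{G}{K}$ ``would allow a reroute that strictly decreases $|F|$'' is precisely the hard content of Seb\"o's theorem; nothing in the sketch indicates how the reroute is constructed or why it shortens $F$, and this step genuinely requires the substantive machinery of Seb\"o's paper rather than a local exchange. Part (vi) has a similar defect: prepending $s_Kr_K$ to a shortest $r$--$x$ path and ``restricting to $E(K)$'' does not in general yield a path of $K$ (a shortest $r$--$x$ path may enter and leave $K$ several times, its restriction to $E(K)$ need not be connected or contain $r_K$), and the weights must be reinterpreted in the graft $(K,(T\cap V(K))\triangle\{r_K\})$ with join $F\cap E(K)$ rather than in $(G,T)$ with $F$. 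Since (iv)--(vi) all lean on (iii), the proposal as written does not constitute a proof of the theorem.
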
 

In this paper, we refer to the above-mentioned structure of $(G, T)$ 
as {\em Seb\"{o}'s distance decomposition with the root $r$}.

\section{Factor-Connectivity and Distance in Grafts} \label{sec:elemdist}

In this section, 
we introduce  a graft analogue of  factor-connectivity and show some basic properties 
of {\em factor-connected grafts} 
regarding distances to be used for proving our main theorem.

Let $(G, T)$ be a graft. 
An edge $e\in E(G)$ is {\em allowed} 
if there is a minimum join of $(G, T)$ that contains $e$. 
We say that vertices $u,v\in V(G)$ are {\em factor-connected} 
if $(G, T)$ has a path whose edges are allowed. 
We say that a graft is {\em factor-connected}  
if any two vertices are factor-connected. 
We call a maximal factor-connected subgraph of $G$ a {\em factor-connected component} 
or {\em factor-component}, in short,  of $(G, T)$. 
We denote the set of factor-components of $(G, T)$ by $\tcomp{G}{T}$. 
It can easily be  observed from the definition that 
$G$ consists of  factor-components, 
which are mutually disjoint,  
and edges joining distinct factor-components, which are not allowed.

We now present some observations about the distance between two factor-connected vertices. 
Theorem~\ref{thm:sebo} \ref{item:sebo:separating} implies the next lemma rather immediately. 

\begin{lemma} \label{lem:elem2dist} 
If $u$ and $v$ are factor-connected vertices in a graft $(G, T)$, 
then $\distgt{G}{T}{u}{v} \le 0$ holds.  
\end{lemma}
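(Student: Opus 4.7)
The plan is to invoke Seb\H{o}'s distance decomposition with root $r := u$ and extract the conclusion from Theorem~\ref{thm:sebo}~\ref{item:sebo:separating}. By construction, $u \in V(Q_u)$ and $V(Q_u) \subseteq \layler{0}{u}$, so every vertex of $Q_u$ has distance at most $0$ from $u$. It therefore suffices to prove that $v$ belongs to $V(Q_u)$.

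Since $u$ and $v$ are factor-connected, I would take a path $P$ from $u$ to $v$ each of whose edges is allowed in $(G, T)$. Suppose for contradiction that $v \notin V(Q_u)$; then, traversing $P$ from $u$, there is a first edge $e$ of $P$ that lies in $\parcut{G}{Q_u}$. However, Theorem~\ref{thm:sebo}~\ref{item:sebo:separating} states that no edge of $\parcut{G}{Q_u}$ is allowed, contradicting the choice of $P$. Hence $v \in V(Q_u) \subseteq \layler{0}{u}$, and therefore $\distgt{G}{T}{u}{v} \le 0$.

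The argument is essentially a one-line deduction from Theorem~\ref{thm:sebo}, as the hint in the excerpt already suggests; there is no real obstacle beyond unpacking the definition of $Q_u$ as the connected component of $G[\layler{0}{u}] - E[\levelr{0}{u}]$ containing $u$ and observing that an allowed-edge walk starting at $u$ cannot escape it.
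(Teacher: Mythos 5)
Your proposal is correct and follows the paper's own argument exactly: apply Seb\H{o}'s decomposition rooted at $u$, use Theorem~\ref{thm:sebo}~\ref{item:sebo:separating} to conclude that an allowed path from $u$ cannot cross $\parcut{G}{Q_u}$, hence $v\in V(Q_u)\subseteq \layler{0}{u}$. The paper states the step ``$u,v\in V(Q_u)$'' without spelling out the first-escaping-edge argument, but the content is the same.
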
 
\begin{proof}  
Consider Seb\"{o}'s distance decomposition with the root $u$. 
From Theorem~\ref{thm:sebo} \ref{item:sebo:separating}, 
we have $u, v\in V(Q_u)$. 
As $V(Q_u) \subseteq \layler{0}{r}$ holds, the statement follows. 
\end{proof}

From Lemmas~\ref{lem:combpath} and \ref{lem:elem2dist}, 
the next lemma is easily confirmed. 

\begin{lemma} \label{lem:elemcomb2dist} 
If $(G, T)$ is a factor-connected comb-bipartite  graft with tooth set $B$ and spine set $A$, 
then, $\distgt{G}{T}{x}{y}  = -1$ for any $x\in A$ and any $y\in B$. 
\end{lemma}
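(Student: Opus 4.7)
The plan is to pin $\distgt{G}{T}{x}{y}$ at $-1$ by establishing the two inequalities $\distgt{G}{T}{x}{y}\le -1$ and $\distgt{G}{T}{x}{y}\ge -1$ separately. Fix any minimum join $F$ of $(G,T)$.

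For the upper bound, factor-connectedness of $(G,T)$ together with Lemma~\ref{lem:elem2dist} immediately gives $\distgt{G}{T}{x}{y}\le 0$. Since $G$ is bipartite with $x$ and $y$ in opposite colour classes, every $x$-$y$ path $P$ has odd length, so $w_F(P)=|E(P)|-2|E(P)\cap F|$ is odd; an odd integer that is at most $0$ is at most $-1$. For the lower bound, I would take any $x$-$y$ path $P$ of length $2k+1$: alternation between $A$ and $B$ places exactly $k+1$ vertices of $P$ in $B$, every edge of $G$ has a unique $B$-endpoint, and by Lemma~\ref{lem:comb} each $v\in B$ satisfies $|\parcut{G}{v}\cap F|=1$. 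Charging each $F$-edge of $P$ to its $B$-endpoint on $P$ therefore yields $|E(P)\cap F|\le k+1$ and hence $w_F(P)\ge -1$. Combining the two bounds gives the claimed equality.

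The argument is entirely routine; I do not foresee any genuine obstacle. An alternative formulation closer to the stated hint would proceed by contradiction using Lemma~\ref{lem:combpath}: a hypothetical path $P$ with $w_F(P)\le -3$ would carry strictly more $F$-edges than $P$ has $B$-vertices, forcing two $F$-edges at a common $B$-vertex and contradicting the comb-bipartite property.
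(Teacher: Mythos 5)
The proposal is correct and matches the paper's (unwritten) argument: the paper merely states that the lemma "is easily confirmed" from Lemmas~\ref{lem:combpath} and \ref{lem:elem2dist}, and your fleshed-out proof --- upper bound from Lemma~\ref{lem:elem2dist} plus bipartite parity, lower bound by charging $F$-edges of a path to their tooth endpoints via Lemma~\ref{lem:comb} --- is exactly the counting that underlies both cited lemmas. No gaps.
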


\section{General Kotzig-Lov\'asz Decomposition for  Joins in Grafts} \label{sec:tjoinkl} 

In this section, we prove our main result  in Theorem~\ref{thm:tjoinkl}, 
a generalization of the general Kotzig-Lov\'asz decomposition for grafts. 

\begin{definition} 
Let $(G, T)$ be a graft. 
For $u, v\in V(G)$, 
we say that $u\gtsim{G}{T} v$ if $u$ and $v$ are identical 
or if $u$ and $v$ are distinct  factor-connected vertices such that $\nu(G, T\triangle \{u, v\})) = \nu(G, T)$. 
\end{definition} 

The binary relation $\gtsim{G}{T}$ is a generalization of 
the equivalence relation $\gsim{G}$; 
confirm the following by Observation~\ref{obs:factorizable}: 
\begin{observation} \label{obs:reduction} 
Let $(G, T)$ be a graft in which  
$G$ is a factorizable graph and the set $T$ is  $V(G)$. 
Then, for any $u, v\in V(G)$, 
 $u \gsim{G} v$ holds if and only if  $u \gtsim{G}{T} v$ holds. 
\end{observation}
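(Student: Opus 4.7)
The plan is to reduce both relations to the same underlying condition. I would first dispose of the case $u = v$: the relation $\gtsim{G}{T}$ holds here by definition, while for $\gsim{G}$, the single-vertex graph is a path with no edges (so every edge in it is vacuously allowed), hence $u$ is factor-connected to itself; moreover $G-u-u = G-u$ has an odd number of vertices (because $|V(G)|$ is even by factorizability), so it is not factorizable, and $u \gsim{G} u$ also holds.

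For $u \neq v$, both $u \gsim{G} v$ and $u \gtsim{G}{T} v$ already agree on requiring factor-connectedness, so assuming this, what remains is to prove that $G-u-v$ is non-factorizable if and only if $\nu(G, T\triangle\{u,v\}) = \nu(G, T)$. Since $T = V(G)$, we have $T\triangle\{u,v\} = V(G)\setminus\{u,v\}$ and, by Observation~\ref{obs:factorizable}, $\nu(G, T) = |V(G)|/2$. For any join $F$ of $(G, T\triangle\{u,v\})$, a straightforward degree-sum computation gives $2|F| \ge |V(G)| - 2$, with equality exactly when $u$ and $v$ are isolated in $F$ and every other vertex has $F$-degree one, that is, when $F$ is a perfect matching of $G-u-v$. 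Hence $\nu(G, T\triangle\{u,v\}) = \nu(G, T) - 1$ if $G-u-v$ is factorizable, and otherwise $\nu(G, T\triangle\{u,v\}) \ge \nu(G, T)$.

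The matching upper bound comes from the preliminaries: factor-connectedness of $u$ and $v$ together with Lemma~\ref{lem:elem2dist} yields $\distgt{G}{T}{u}{v} \le 0$, which by Lemma~\ref{lem:dist2nu} translates into $\nu(G, T\triangle\{u,v\}) \le \nu(G, T)$. Combining the two inequalities confines $\nu(G, T\triangle\{u,v\})$ to the two-element set $\{\nu(G, T) - 1,\ \nu(G, T)\}$, and the dichotomy above identifies the value $\nu(G, T)$ with $G-u-v$ being non-factorizable. No step in the argument looks like a real obstacle; the only care point is correct parity bookkeeping in the degree-sum step, and once Lemmas~\ref{lem:elem2dist} and~\ref{lem:dist2nu} pin down the distance between $u$ and $v$ as lying in $\{-1,0\}$, the equivalence is a short calculation.
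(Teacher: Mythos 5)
Your argument is correct and is essentially the fully written-out version of the verification the paper leaves to the reader (the paper only instructs one to confirm the statement via Observation~\ref{obs:factorizable}). The degree-sum dichotomy $\nu(G, T\triangle\{u,v\})\in\{\nu(G,T)-1,\ \nu(G,T)\}$ combined with Lemmas~\ref{lem:dist2nu} and~\ref{lem:elem2dist} is exactly the right bookkeeping, and the one point you leave implicit---that the two notions of ``allowed'' edge, hence of factor-connectivity, coincide when $T=V(G)$---is itself immediate from Observation~\ref{obs:factorizable}, which you already invoke.
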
 

In the following, we prove that $\gtsim{G}{T}$ is an equivalence relation. 
Note that Lemmas~\ref{lem:dist2nu} and \ref{lem:elem2dist} imply the following: 
\begin{lemma} \label{lem:nonsim} 
Let $(G, T)$ be a graft, and let $u,v\in V(G)$ be factor-connected vertices.  
Then, $u \gtsim{G}{T} v$ holds if and only if  $\distgt{G}{T}{u}{v} = 0$; 
by contrast, $u \gtsim{G}{T} v$ does not hold 
if and only if $\distgt{G}{T}{u}{v}  < 0$. 
\end{lemma}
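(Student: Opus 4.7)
The plan is to derive this directly from the two preceding lemmas, treating it essentially as a bookkeeping consequence of Lemmas~\ref{lem:dist2nu} and \ref{lem:elem2dist}. The hypothesis that $u$ and $v$ are factor-connected is doing two things: it makes $\gtsim{G}{T}$ meaningful (the relation requires factor-connectedness in the nontrivial case), and, via Lemma~\ref{lem:elem2dist}, it forces $\distgt{G}{T}{u}{v}\le 0$, so the only two possibilities are $\distgt{G}{T}{u}{v}=0$ or $\distgt{G}{T}{u}{v}<0$. This dichotomy is what makes the two ``if and only if'' statements equivalent to each other, so essentially only the first biconditional needs to be proved.

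First I would dispose of the case $u=v$. Here $\distgt{G}{T}{u}{v}=0$ by the convention that distances from a vertex to itself vanish, and $u\gtsim{G}{T}v$ holds by the first clause of the definition of $\gtsim{G}{T}$, so both sides of the first biconditional are true, and both sides of the second are false.

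Then I would treat the case $u\neq v$. By Lemma~\ref{lem:dist2nu},
\[
\distgt{G}{T}{u}{v} = \nu(G, T\triangle\{u,v\}) - \nu(G, T).
\]
Thus $\distgt{G}{T}{u}{v}=0$ is literally the same statement as $\nu(G, T\triangle\{u,v\}) = \nu(G, T)$, which, together with the assumed factor-connectedness of $u$ and $v$, is exactly the defining condition of $u\gtsim{G}{T}v$ in the distinct-vertex clause. This gives the first biconditional.

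For the second biconditional, Lemma~\ref{lem:elem2dist} supplies $\distgt{G}{T}{u}{v}\le 0$, so $\distgt{G}{T}{u}{v}<0$ is the negation of $\distgt{G}{T}{u}{v}=0$; combined with the first biconditional, this yields the second. There is no real obstacle here; the only subtlety worth flagging is the $u=v$ edge case and the use of Lemma~\ref{lem:elem2dist} to rule out a strictly positive distance, which is what prevents a spurious third case from arising.
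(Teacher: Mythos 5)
Your proof is correct and follows exactly the route the paper intends: the paper offers no separate argument for this lemma, simply noting that it is implied by Lemmas~\ref{lem:dist2nu} and \ref{lem:elem2dist}, which is precisely the combination you spell out (including the $u=v$ edge case and the use of Lemma~\ref{lem:elem2dist} to exclude positive distance).
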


The next two lemmas are provided for Theorem~\ref{thm:tjoinkl}.

\begin{lemma} \label{lem:compimage} 
Let $(G, T)$ be a graft, let $r\in V(G)$, and let $H\in\tcomp{G}{T}$ be the factor-component with $r\in V(H)$. 
Then, $V(H)\subseteq Q_r$ holds. 
In the graft $(Q_r', T_r')$,  
the  vertices in 
$( V(H)\cap \levelr{0}{r} ) \cup \{ [K] : K \in\conn{Q_r - \levelr{0}{r}} \mbox{ and } V(K)\cap V(H)\}$, 
that is, the vertices from the subgraph of $Q_r'$ that corresponds to $H$, 
are factor-connected. 
\end{lemma}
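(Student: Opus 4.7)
\medskip

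\noindent\textbf{Proof proposal.}
My plan is to carry out two steps: first show $V(H)\subseteq V(Q_r)$ directly from the ``separating'' property of $Q_r$, and then lift an allowed-edge path in $(G,T)$ between two vertices of $H$ to an allowed-edge walk in $(Q_r',T_r')$ between their images. For the first step, pick any $v\in V(H)$ and a path $P$ from $r$ to $v$ whose edges are all allowed in $(G,T)$ (exists since $r,v$ lie in the same factor-component). By Theorem~\ref{thm:sebo}\ref{item:sebo:separating}, no edge in $\parcut{G}{Q_r}$ is allowed, so $P$ cannot leave $Q_r$; hence $v\in V(Q_r)$. By the same token, \emph{every} allowed-edge path starting inside $Q_r$ stays inside $Q_r$, which is what I will use repeatedly in the second step.

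The key technical lemma I need is the following correspondence: if $e=vw$ is an allowed edge of $(G,T)$ with $v\in\levelr{0}{r}$ and $w\in V(K)$ for some $K\in\conn{Q_r-\levelr{0}{r}}$, then the corresponding edge $v[K]$ is allowed in $(Q_r',T_r')$. To see this, pick a minimum join $F'$ of $(G,T)$ containing $e$ and invoke Theorem~\ref{thm:sebo} with $F'$ in place of $F$; the sets $\levelr{0}{r}$, $\laysigr{-}{r}$, $Q_r$, $Q_r'$, $T_r'$ depend only on distances, hence are unchanged by Lemma~\ref{lem:dist2nu}. Item~\ref{item:sebo:join2comb} then forces $|\parcut{G}{K}\cap F'|=1$, so this unique edge must be $e$ itself; item~\ref{item:sebo:comb} delivers a minimum join of $(Q_r',T_r')$ that contains $v[K]$, proving it is allowed.

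Given these, take any $x,y$ in the set described in the lemma, choose preimages $u,v\in V(H)$ that map to $x,y$ respectively, and take a path $P$ in $G$ from $u$ to $v$ consisting of allowed edges of $(G,T)$. By the first step $P$ lies in $Q_r$, and by Theorem~\ref{thm:sebo}\ref{item:sebo:extreme} no edge of $P$ lies in $E[\levelr{0}{r}]$; every edge of $P$ is thus either internal to some $K\in\conn{Q_r-\levelr{0}{r}}$ or crosses from $\levelr{0}{r}$ into some such $K$. Contracting and deleting according to the definition of $Q_r'$ collapses the first type into loops at $[K]$ and preserves the second type; by the correspondence above, the surviving edges are allowed in $(Q_r',T_r')$. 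The result is an allowed-edge walk from $x$ to $y$ in $(Q_r',T_r')$, from which an allowed-edge path may be extracted; hence $x$ and $y$ are factor-connected there. I expect the main obstacle to be the correspondence step: one must be careful that the minimum join $F'$ producing the desired allowed edge of $(Q_r',T_r')$ really yields the same decomposition, and this relies essentially on the minimum-join-independence of distances guaranteed by Lemma~\ref{lem:dist2nu}.
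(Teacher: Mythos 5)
Your proposal is correct and follows essentially the same route as the paper: containment $V(H)\subseteq V(Q_r)$ via Theorem~\ref{thm:sebo}~\ref{item:sebo:separating}, avoidance of $E[\levelr{0}{r}]$ via \ref{item:sebo:extreme}, and the transfer of allowedness of crossing edges to $(Q_r',T_r')$ via \ref{item:sebo:join2comb} and \ref{item:sebo:comb}. You simply spell out in more detail the correspondence step (choosing a minimum join $F'$ containing the given allowed edge and using the join-independence of the decomposition from Lemma~\ref{lem:dist2nu}), which the paper asserts in one line.
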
 
\begin{proof} 
From Theorem~\ref{thm:sebo} \ref{item:sebo:separating},  $V(H)\subseteq Q_r$ follows.  
From Theorem~\ref{thm:sebo} \ref{item:sebo:extreme}, 
for any two vertices of $V(H)$, 
there is a path whose edges are allowed edges of $(G, T)$ from $E(H)\setminus E[\levelr{0}{r}]$. 
From Theorem~\ref{thm:sebo} \ref{item:sebo:join2comb}, 
the allowed edges of $(G, T)$ that join $\levelr{0}{r}$ and   components from $\conn{Q_r - \levelr{0}{r}}$
are also allowed in $(Q_r', T_r')$. 
Therefore, the statement follows. 
\end{proof}

\begin{lemma} \label{lem:pathext} 
Let $(G, T)$ be a graft, let $F$ be a minimum join of $(G, T)$, and let $r\in V(G)$. 
Let $u\in \levelr{0}{r}$ and $K\in\conn{Q_r - \levelr{0}{r}}$.  
If the graft $(Q_r', T'_r)$ has a path $P$ between $u$ and $[K]$ 
with $w_F(P) = -1$, 
then, for any $v\in V(K)$, 
the graft $(G, T)$ has a path $\hat{P}$ between $u$ and $v$ 
with $w_F(\hat{P}) \le -1$. 
\end{lemma}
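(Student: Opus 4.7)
The plan is to lift the path $P$ from the contracted comb-bipartite graft $(Q_r', T_r')$ back to $G$ by splicing in short internal paths through each tooth that $P$ visits. Since $P$ is a path in the bipartite graft from the spine vertex $u$ to the tooth $[K]$ with $w_F(P) = -1$, Lemma~\ref{lem:combpath} applies to $(Q_r', T_r')$ by Theorem~\ref{thm:sebo}~\ref{item:sebo:comb}, so at every tooth vertex of $P$ (including the endpoint $[K]$) exactly one edge of $P$ lies in $F$. Combined with Theorem~\ref{thm:sebo}~\ref{item:sebo:join2comb}, which says the unique edge of $F \cap \parcut{G}{K'}$ for each $K' \in \conn{Q_r - \levelr{0}{r}}$ is $s_{K'}r_{K'}$, this forces the $F$-edge of $P$ incident to each tooth $[K']$ to correspond, in $G$, to the edge $s_{K'}r_{K'}$ meeting $K'$ at the vertex $r_{K'}$.

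With this identified, I would label the interior teeth $[K_0'], \dots, [K_{k-2}']$ traversed by $P$ and let $a_i, b_i \in V(K_i')$ be the two vertices of $K_i'$ where the two $P$-edges incident to $[K_i']$ enter $K_i'$ in $G$; by the previous paragraph, one of $\{a_i, b_i\}$ equals $r_{K_i'}$. Theorem~\ref{thm:sebo}~\ref{item:sebo:negcomp2dist} then supplies, inside $K_i'$, a path from $a_i$ to $b_i$ of $w_F$-weight at most $0$ (trivial when $a_i = b_i = r_{K_i'}$ is not the case, but one endpoint being $r_{K_i'}$ means the bound applies directly to the other). Similarly, at the terminal tooth $[K] = [K_{k-1}']$, the unique $P$-edge incident to $[K]$ is in $F$ and therefore enters $K$ at $r_K$; Theorem~\ref{thm:sebo}~\ref{item:sebo:negcomp2dist} provides a path inside $K$ from $r_K$ to the prescribed $v$ of $w_F$-weight at most $0$.

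I would then form $\hat{P}$ by concatenating the edges of $P$ (interpreted in $G$) with the internal path through each $K_i'$ at the corresponding tooth, and finally the internal path from $r_K$ to $v$ in $K$. Since the vertex sets of the $K_i'$ are pairwise disjoint and disjoint from $\levelr{0}{r}$, and the spine vertices of $P$ lie in $\levelr{0}{r}$, the concatenation is automatically a simple path between $u$ and $v$. The weight calculation is
\begin{equation*}
w_F(\hat{P}) \;=\; w_F(P) \;+\; \sum_{i=0}^{k-2} w_F(\text{path in } K_i') \;+\; w_F(\text{path from } r_K \text{ to } v \text{ in } K) \;\le\; -1 + 0 + 0 \;=\; -1,
\end{equation*}
giving the required bound.

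The main obstacle is the bookkeeping in the middle step: showing that at each interior tooth of $P$ the $F$-edge of $P$ coincides with $s_{K'}r_{K'}$ so that one of the two entry points into $K'$ is exactly $r_{K'}$, which is what makes Theorem~\ref{thm:sebo}~\ref{item:sebo:negcomp2dist} directly applicable. Everything else is weight-accounting and a disjointness check for simplicity of $\hat{P}$.
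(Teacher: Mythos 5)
Your proposal is correct and follows essentially the same route as the paper: apply Lemma~\ref{lem:combpath} in $(Q_r', T_r')$ to locate the unique $F$-edge of $P$ at each tooth, identify it via Theorem~\ref{thm:sebo}~\ref{item:sebo:join2comb} with $s_{K'}r_{K'}$ so that it enters $K'$ at $r_{K'}$, splice in the weight-$\le 0$ internal paths supplied by Theorem~\ref{thm:sebo}~\ref{item:sebo:negcomp2dist}, and add up the weights. The only difference is that you make explicit the identification $x_L = r_{L}$ that the paper leaves implicit when invoking \ref{item:sebo:negcomp2dist}.
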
 
\begin{proof} 
First, note that, according to Theorem~\ref{thm:sebo} \ref{item:sebo:join2comb}, 
$F$ contains a minimum join of $(Q_r', T_r')$. 
Under Lemma~\ref{lem:combpath}, 
for each $L \in \conn{Q_r - \levelr{0}{r}}$ with $[L]\in V(P)$,  
define the following: 
Let $e_L$ be the edge from $\parcut{P}{[L]}\cap F$; 
  additionally, let $x_L \in V(G)$ be the end of $e_L$  from $V(L)$ 
 by regarding $e_L$ as an edge of $G$. 
Furthermore, if $L \neq K$, then   
let $f_L$ be the edge from $\parcut{P}{[L]}\setminus F$; 
additionally, let $y_L \in V(G)$ be the end of $f_L$ from $V(L)$ 
by regarding $f_L$ as an edge of $G$. 
If $L = K$, then let $y_L$ be a given vertex $v\in V(K)$. 
By Theorem~\ref{thm:sebo} \ref{item:sebo:negcomp2dist}, 
each $L$ has a path $R_L$ between $x_L$ and $y_L$ with $w(R_L) \le 0$. 
By replacing each $[L]$ with $Q_L$ over $P$, 
we obtain a desired path $\hat{P}$ with $w_F(\hat{P}) \le -1$. 
\end{proof}

We now prove Theorem~\ref{thm:tjoinkl}. 

\begin{theorem} \label{thm:tjoinkl}
For any graft $(G, T)$, 
the binary relation $\gtsim{G}{T}$ is an equivalence relation on $V(G)$. 
\end{theorem}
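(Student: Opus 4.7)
The plan is to verify the three properties of an equivalence relation for $\gtsim{G}{T}$. Reflexivity is built into the definition, and symmetry is immediate since $T\triangle\{u,v\}=T\triangle\{v,u\}$ and the factor-connectivity relation is symmetric. All the work sits in transitivity, and for that I will argue by contradiction using Seb\"o's distance decomposition.

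Fix $u \gtsim{G}{T} v$ and $v \gtsim{G}{T} w$ with $u\ne w$. Concatenating two allowed paths at $v$ shows that factor-connectivity is transitive, so $u$ and $w$ are factor-connected, and Lemma~\ref{lem:elem2dist} gives $\distgt{G}{T}{u}{w}\le 0$. By Lemma~\ref{lem:nonsim}, it suffices to rule out $\distgt{G}{T}{u}{w}<0$, so I assume this for contradiction and apply Seb\"o's distance decomposition with root $u$. The hypothesis $u\gtsim{G}{T}v$ together with Lemma~\ref{lem:nonsim} places $v$ in $\levelr{0}{u}$, while the standing assumption places $w$ in $V(K)$ for some $K\in\conn{Q_u-\levelr{0}{u}}$ (using Theorem~\ref{thm:sebo}~\ref{item:sebo:separating} to keep $w$ inside $Q_u$). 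By Lemma~\ref{lem:compimage}, the spine vertex $v$ and the tooth vertex $[K]$ of the comb-bipartite graft $(Q_u',T_u')$ lie in a common factor-component $H'$ of $(Q_u',T_u')$.

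The endgame is to produce a path $P$ in $(Q_u',T_u')$ from $v$ to $[K]$ with $w_F(P)=-1$, lift it via Lemma~\ref{lem:pathext} to a path $\hat{P}$ in $(G,T)$ between $v$ and $w$ with $w_F(\hat{P})\le -1$, and conclude $\distgt{G}{T}{v}{w}<0$, which contradicts $v\gtsim{G}{T}w$ through Lemma~\ref{lem:nonsim}. To produce $P$ I want to show that the factor-component $H'$ is itself comb-bipartite, so that Lemma~\ref{lem:elemcomb2dist} forces the spine-to-tooth distance from $v$ to $[K]$ to be exactly $-1$. For this I invoke the standard observation that edges across distinct factor-components carry no allowed edge, so any minimum join of $(Q_u',T_u')$ partitions along $\tcomp{Q_u'}{T_u'}$ into minimum joins of each factor-component; combined with Lemma~\ref{lem:comb}, which forces every tooth vertex to meet the minimum join in exactly one edge, this gives $\nu(H',T_u'\cap V(H'))=|B\cap V(H')|$ and hence comb-bipartiteness of $H'$.

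The main obstacle I expect is exactly this promotion of the comb-bipartite structure from $(Q_u',T_u')$ to the sub-graft $H'$: it is the step that upgrades the easy parity bound ``distance $\le -1$'' to the equality ``distance $=-1$'' needed to match the hypothesis of Lemma~\ref{lem:pathext}. Once that is in place, the Seb\"o decomposition together with Lemmas~\ref{lem:compimage}, \ref{lem:elemcomb2dist}, and \ref{lem:pathext} slot together mechanically to close the contradiction.
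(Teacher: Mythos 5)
Your proposal is correct and follows essentially the same route as the paper: Seb\"o's decomposition rooted at $u$, Lemma~\ref{lem:compimage} to place $v$ and $[K]$ in a common factor-component of $(Q_u',T_u')$, Lemma~\ref{lem:elemcomb2dist} to obtain a path of weight $-1$ from $v$ to $[K]$, and Lemma~\ref{lem:pathext} to lift it to a path contradicting $v\gtsim{G}{T}w$. The only place you go beyond the paper is in explicitly verifying that the relevant factor-component of $(Q_u',T_u')$ is itself comb-bipartite before invoking Lemma~\ref{lem:elemcomb2dist} --- a step the paper leaves implicit --- and your justification of that step is sound.
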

\begin{proof} 
Symmetry and reflexivity are obvious from the definition. 
Hence, we prove transitivity in the following. 
Let $u, v, w\in V(G)$ be such that $u \gtsim{G}{T} v$ and $v \gtsim{G}{T} w$. 
If any two from $u$, $v$, and $w$ are identical, then the statement obviously holds. 
Therefore, assume that $u, v, w$ are pairwise distinct, and suppose, to the contrary, that $u\gtsim{G}{T} w$ does not hold. 
Let $H$ be the factor-component that contains $u$, $v$, and $w$, 
and let $F$ be a minimum join of $(G, T)$.  
Consider Seb\"{o}'s distance decomposition with the root $u$. 
Because $u\gtsim{G}{T} v$ is assumed, Lemma~\ref{lem:nonsim} implies $v\in \levelr{0}{u} \cap V(Q_u)$. 
Under the present supposition, 
Lemma~\ref{lem:nonsim} implies  $w\in \laysigr{-}{r}$,  and  accordingly, 
there exists $K\in\conn{Q_u - \levelr{0}{u}}$ such that $w\in V(K)$. 
According to Lemma~\ref{lem:compimage}, 
in the comb-bipartite graft $(Q_u', T'_r)$,  
the three vertices $u$, $v$, and $[K]$ are factor-connected. 
Therefore, from Lemma~\ref{lem:elemcomb2dist}, 
$Q_u'$ has a path $P$ between $v$ and $[K]$ with $w_F(P) = -1$. 
Thus, from Lemma~\ref{lem:pathext}, 
$G$ has a path $\hat{P}$ between $v$ and $w$ with $w_F(\hat{P}) \le -1$. 
This contradicts $v \gtsim{G}{T} w$. 
Thus, $u \gtsim{G}{T} w$ is proved. 
\end{proof} 

We call the family of equivalence classes determined by $\gtsim{G}{T}$
 the {\em general Kotzig-Lov\'asz decomposition} of a graft $(G, T)$. 
We denote this family by $\gtpart{G}{T}$. 
From Observation~\ref{obs:reduction}, 
the general Kotzig-Lov\'asz decomposition for general grafts 
is a generalization of the general Kotzig-Lov\'asz decomposition for factorizable graphs. 
Our result also includes, as a special case, 
the result announced in Seb\"o~\cite{DBLP:journals/jct/Sebo90}; 
if we restrict $(G, T)$ to be factor-connected, 
then $\gtpart{G}{T}$ is a graft analogue of the classical Kotzig-Lov\'asz decomposition.

By the definition of $\gtsim{G}{T}$, 
each equivalence class is contained in the vertex set of some factor-component of $(G, T)$. 
Therefore, for each $H\in\tcomp{G}{T}$, 
the family $\{ S\in \gtpart{G}{T}: S \subseteq  V(H) \}$  
forms a partition of $V(H)$. 
We denote this family by $\pargtpart{G}{T}{H}$. 
However, note that, as is also the case for $1$-factors, 
$\gtpart{G}{T}$ is not a mere disjoint union of 
$\pargtpart{G}{T}{H}$ taken over every $H\in\tcomp{G}{T}$, 
but has a more refined structure. 

\begin{observation} 
Let $(G, T)$ be a graft, and let $H\in\tcomp{G}{T}$. 
Then, the family $\pargtpart{G}{T}{H}$ 
is a refinement of $\gtpart{H}{T\cap V(H)}$. 
That is, 
if $u,v\in V(H)$ satisfy $u\gtsim{G}{T} v$, 
then $u \gtsim{H}{T\cap V(H)} v$ holds; 
however, the converse does not hold in general. 
\end{observation}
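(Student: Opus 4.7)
The plan is to prove the forward containment by a distance-preservation argument that reduces distances in $(G,T)$ to distances in $(H, T\cap V(H))$, and to obtain the strict-refinement part by appealing to the already-known phenomenon for $1$-factors.

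The first step is the following bookkeeping lemma, which I would prove before touching $\gtsim{G}{T}$: if $F$ is a minimum join of $(G,T)$, then $F \cap E(H)$ is a minimum join of $(H, T\cap V(H))$. Since every allowed edge lies in a single factor-component, $F$ contains no edge of $\parcut{G}{V(H)}$, and so $F$ splits as a disjoint union of its restrictions to the various factor-components. A parity check at each vertex of $V(H)$ then shows $F \cap E(H)$ is a join of $(H, T\cap V(H))$; any strictly smaller join of $(H, T\cap V(H))$ could be swapped in for $F \cap E(H)$ inside $F$, contradicting the minimality of $F$. As a by-product, $(H, T\cap V(H))$ is a graft, and every edge of $E(H)$ that is allowed in $(G,T)$ is also allowed in $(H, T\cap V(H))$, witnessed by the restriction of a minimum join of $(G,T)$ containing it.

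Now assume $u,v\in V(H)$ with $u \gtsim{G}{T} v$ and $u\ne v$. A path of $(G,T)$-allowed edges from $u$ to $v$ must remain inside $H$, because each of its edges lies in a single factor-component and the path begins in $V(H)$; by the previous step, this same path consists of edges allowed in $(H, T\cap V(H))$, so $u$ and $v$ are factor-connected there. Applying Lemma~\ref{lem:elem2dist} inside $(H, T\cap V(H))$ gives $\distgt{H}{T\cap V(H)}{u}{v} \le 0$. For any path $P$ inside $H$, we have $w_F(P) = w_{F \cap E(H)}(P)$, so a strictly negative path from $u$ to $v$ in $(H, T\cap V(H))$ would witness $\distgt{G}{T}{u}{v} < 0$, contradicting Lemma~\ref{lem:nonsim} applied to the hypothesis $u \gtsim{G}{T} v$. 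Therefore $\distgt{H}{T\cap V(H)}{u}{v} = 0$, and Lemma~\ref{lem:nonsim} again yields $u \gtsim{H}{T\cap V(H)} v$, as desired.

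The main technical point is precisely the distance comparison: one needs that the weights $w_F$ on $E(H)$ do not change when we pass to $(H, T\cap V(H))$, and this is exactly what the factor-component decomposition of $F$ delivers. Everything else is packaging around Lemmas~\ref{lem:nonsim} and~\ref{lem:elem2dist}. For the claim that the refinement is strict in general, I would simply cite Observation~\ref{obs:reduction}: an instance in which the general Kotzig-Lov\'asz decomposition of a factorizable graph $G$ with $T=V(G)$ properly refines the disjoint union of the classical Kotzig-Lov\'asz decompositions of its factor-components, as already exhibited in~\cite{kitacathedral, DBLP:journals/corr/abs-1205-3816, DBLP:conf/isaac/Kita12}, is automatically a graft example showing that $u \gtsim{H}{T\cap V(H)} v$ need not imply $u \gtsim{G}{T} v$.
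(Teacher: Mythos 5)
Your argument is correct. The paper itself offers no proof of this Observation (it is stated without justification, and only the example of Figures~\ref{fig:kl} and~\ref{fig:elemkl} is given to witness that the converse fails), so there is no official route to compare against; your write-up supplies the missing details soundly. The key point you isolate --- that a minimum join $F$ of $(G,T)$ consists of allowed edges, hence decomposes over factor-components, hence restricts to a minimum join of $(H, T\cap V(H))$ with $w_F$ and $w_{F\cap E(H)}$ agreeing on $E(H)$ --- is exactly what is needed to transfer Lemma~\ref{lem:nonsim} between the two grafts, and your two-sided squeeze ($\le 0$ from Lemma~\ref{lem:elem2dist} applied in $H$, and $\ge 0$ because a negative path in $H$ would be a negative path in $G$) is clean. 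Two minor remarks: the degenerate case $u=v$ should be dispatched explicitly (it is immediate from the definition of $\gtsim{H}{T\cap V(H)}$), and for the non-converse you lean on the cited $1$-factor literature via Observation~\ref{obs:reduction}, which is legitimate, though the paper's own Figures~\ref{fig:kl} and~\ref{fig:elemkl} already give a self-contained graft example you could point to instead.
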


\begin{figure}
\includegraphics[width=.7\textwidth]{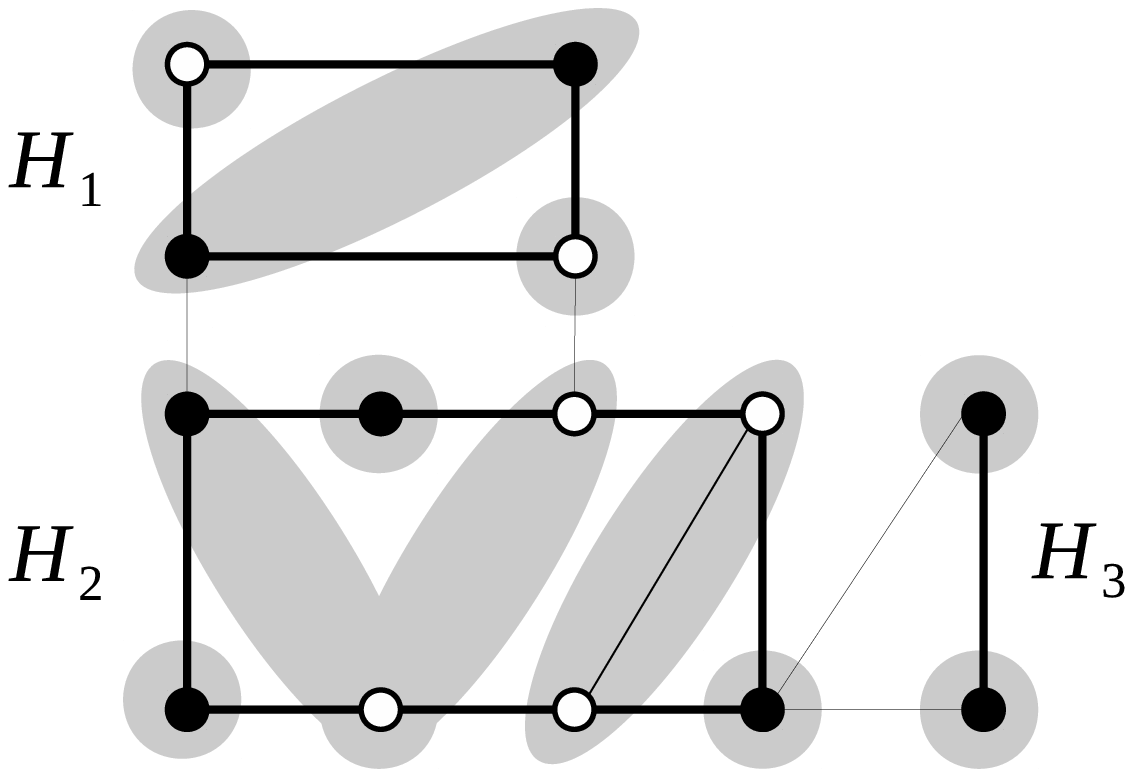} 
\caption{Example of a graft $(G, T)$ and its general Kotzig-Lov\'asz decomposition. 
The black and white vertices represent vertices in $T$ and not in $T$, respectively. 
The thick edges represent allowed edges, and  $\tcomp{G}{T} = \{H_1, H_2, H_3\}$. 
Each equivalence class from $\gtpart{G}{T}$ is indicated by a gray region. } 
\label{fig:kl} 

\includegraphics[width=.3\textwidth]{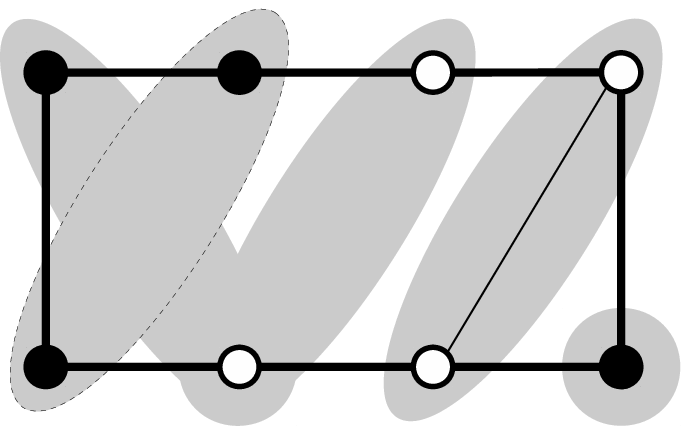} 
\caption{The Kotzig-Lov\'asz decomposition of the subgraft $(H_1, T\cap V(H_1))$. 
The family $\pargtpart{G}{T}{H_1}$ from Figure~\ref{fig:kl} is a proper refinement of $\gtpart{H_1}{T\cap V(H_1)}$.
} 
\label{fig:elemkl}  
\end{figure} 

\begin{example} 
For example, consider the graft $(G, T)$ given in Figure~\ref{fig:kl}. 
For this graft, $\gtpart{G}{T}$ has 10 members as follows. 
For each factor-component $H_1$, $H_2$, and $H_3$, 
$\pargtpart{G}{T}{{H_1}}$, $\pargtpart{G}{T}{{H_2}}$, and $\pargtpart{G}{T}{{H_3}}$ 
have three, five, and two members, respectively. 
If we consider $(H_1, T\cap V(H_1))$ as a single graft, 
$\gtpart{H_1}{T\cap V(H_1)}$ has four members as shown in Figure~\ref{fig:elemkl}. 
The family $\pargtpart{G}{T}{{H_1}}$ given in Figure~\ref{fig:kl} 
is a proper refinement of $\gtpart{H_1}{T\cap V(H_1)}$.  
\end{example}

\begin{ac} 
This study is partly supported by JSPS KAKENHI 15J09683. 
\end{ac} 

\bibliographystyle{splncs03.bst}
\bibliography{tjoinkl.bib}

\end{document}